\newcommand{\Z}{{\ensuremath{\mathbb{Z}}}}
\newcommand{\rst}[1]{\ensuremath{{\big|} \raise-1.25ex\hbox{$\scriptscriptstyle#1$}}}
\def\MyNewTheorem#1[#2]#3{%
  \newaliascnt{#1}{#2}
  \newtheorem{#1}[#1]{#3}
  \aliascntresetthe{#1}
  \expandafter\newcommand\csname #1autorefname\endcsname{#3}
}
\theoremstyle{MyNewTheorem}
\newtheorem{theorem}{Theorem}
\newtheorem*{thm*}{Theorem}
\newtheorem{problem}{Problem}
\newtheorem*{rep@theorem}{\rep@title}
\newcommand{\newreptheorem}[2]{%
\newenvironment{rep#1}[1]{%
 \def\rep@title{#2 \ref{##1}}%
 \begin{rep@theorem}}%
 {\end{rep@theorem}}}
\def\equationautorefname~#1\null{(#1)\null}
\def\itemautorefname~#1\null{#1\null}
\title{Independence of Iterated Whitehead Doubles}
\author{Juanita Pinz\'on-Caicedo}
\address{Department of Mathematics, North Carolina State University,
Raleigh, NC 27695}
\email{jpinzon@ncsu.edu}
\date{}
\begin{document}
\maketitle
\begin{abstract} A theorem of Furuta and Fintushel-Stern provides a criterion for a collection of Seifert fibred homology spheres to be independent in the homology cobordism group of oriented homology 3-spheres. In this article we use these results and some 4-dimensional constructions to produce infinite families of positive torus knots whose iterated Whitehead doubles are independent in the smooth concordance group. 
\end{abstract}
\bigskip

\section{Introduction}

A smooth knot is a smooth embedding of the circle $S^1$ into the 3-sphere $S^3$, and as a consequence of the (unpublished) worked of Thurston, every knot is either hyperbolic, a torus, or a satellite knot \cite{Thurston}. Hyperbolic knots are those whose complement admits a hyperbolic structure, torus knot are those that lie on the surface of an unknotted torus in $S^3$ and are specified by a pair of coprime integers $p$ and $q$, finally, satellite knots are those whose complement contains an incompressible, non boundary-parallel torus. Moreover, satellite knots are constructed from two given knots, $P$ and $K$, in the following way. Let $P\sqcup J$ be a link in $S^3$ with $J$ and unknot so that $P$ lies in the solid torus $V=S^3\setminus N(J)$. The satellite knot with pattern $P\sqcup J$ and companion $K$ is denoted by $P(K)$ and is obtained as the image of $P$ under the embedding of $V$ in $S^3$ that knots $V$ as a tubular neighborhood of $K$, using the 0-framing of $K$.\\

The set of isotopy classes of knots is a semigroup with connect-sum as its binary operation \cite[Chapter 1, Section 5]{murasugi}. 
To obtain a group structure topologists consider another equivalence relation on the set of knots, concordance. Two smooth knots $K_0,K_1\subset S^3$ are smoothly concordant if there exist a smoothly and properly embedded annulus $A$ into the cylinder $I\times S^3$ that restricts to the given knots at each end. However, if the embedding of $A$ into $I\times S^3$ is locally flat and proper, the knots $K_0$ and $K_1$ are said to be topologically concordant. These two different approaches give rise to the two related theories of smooth and topological concordance, and both induce an abelian group structure on the set of knots with connected sum as the operation. Studying the relationship between smooth and topological concordance is an area of active research in knot theory specially because the difference between smooth and topologically slice knots is related to subtle differences in the set of differentiable structures on 4--manifolds. One approach to this problem is to understand the group structure of the ``forgetful homomorphism'' $\mathcal{C}\to \mathcal{C}_{\text{TOP}}.$ The 1980's saw the birth of tremendously important results in that direction: those of Freedman \cite{freedman, freedman2, freedman-quinn}  and Donaldson \cite{d1,d2,d3}. These results allowed topologists to actually understand just how vastly different $\mathcal{C}$ and $\mathcal{C}_{\text{TOP}}$ are. Indeed, on the one hand, Freedman's theorem implies that knots with Alexander polynomial 1 are topologically slice, and on the other, Donaldson's theorem can be used to show that some knots with Alexander polynomial 1 are not smoothly slice. As an example, if the link $D^r\sqcup J$ is the $r$-th iterated Whitehead link, $D^r(K)$ is called the (untwisted positively clasped) $r$-th iterated Whitehead double of $K$. Since the pattern $D^r$ is an unknot in $S^3$ and is trivial in $H_1(V; \Z)$, then the satellite $D^r(K)$ has trivial Alexander polynomial and is thus topologically slice. Part of the motivation to study independence of Whitehead doubles comes from the following two problems in Kirby's list \cite{kirby-problems}. 

\begin{problem}[1.38 in \cite{kirby-problems}] The untwisted double of a knot is slice if and only if the knot is slice.\end{problem}

\begin{problem}[1.39 in \cite{kirby-problems}] Is any $D^r\sqcup J$ null-concordant?\end{problem}

It was Akbulut \cite{akbulut} who first used Donaldson's theorem to prove that $D(T_{2,3})$, the Whitehead double of the right handed trefoil knot, is not smoothly slice. Akbulut's technique was extended by Cochran-Gompf \cite{cochran-gompf} to show that Whitehead doubles of positive torus knots have infinite order. The introduction of Seiberg-Witten invariants and their relationship with the invariant of Thurston-Benequin allowed Rudolph \cite{r1,r2} to show that iterated doubles of strongly quasipositive knots (of which torus knots are an example) are not smoothly slice. Next, the work of Ozsv\'ath and Szabo in Heegaard Floer was used by Hedden \cite{hedden} to generalize the result of Rudolph, and by Park \cite{park} to show that $D(T_{p,2m+1})$ and $D^2(T_{p,2m+1})$ generate a rank-2 subgroup of $\mathcal{C}$. Finally, the work of Furuta \cite{furuta} and Fintushel-Stern \cite{FS-inst}  on the theory of instantons and Chern-Simons invariants was used by Hedden-Kirk \cite{hedden-kirk} to show that certain families of Whitehead doubles of positive torus knots generate a subgroup of $\mathcal{C}$ of maximal infinite rank. Their work was generalized to other Whitehead-like patterns in \cite{tesis} and the present work can be regarded as its sequel.\\

As usual with concordance, the methods involve the topology of 3-- and 4--manifolds. If a knot $K$ is slice, then on one hand, the 2-fold cover of $S^3$ branched over $K$ bounds a smooth 4--manifold with the same $\Z/2$ homology as the 4--ball, and on the other, surgery on $S^3$ along $K$ bounds a 4--manifold with an embedded 2-sphere. Then, obstructions to these 3--manifolds from bounding smooth 4--manifolds with the prescribed topology provide obstructions to the knots used in their constructions from being slice. In the present article the obstruction to the sliceness of sums of Iterated Doubles will come from the study of the moduli space of ASD connections on a 4--manifold with cylindrical ends modeled over the 2-fold covers of $S^3$ branched over the doubles.\\

\textbf{Acknowledgements:} I would like to thank the organizers of the conference ``Topology in dimension 3.5'' since the conference provided a favorable environment to conceive the first ideas of the present work. I would also like to thank the organizers and participants of the ``Conference on 4-manifolds and knot concordance'' for bearing with me as I presented an earlier and mistaken version of the main result of the present article. Special thanks go to David Krcatovich, Marco Golla, and Marco Marengon for spending enough time paying careful attention to my arguments and finally finding the mistake in my logic, and to Matt Hedden for listening to the correct proof of the theorem.\\

\textbf{Outline:} To establish the result we will translate the question of concordance of knots into a question of homology cobordism of their 2-fold covers, and so \autoref{sec::covers} focuses on a description of the topology of the covers of iterated Whitehead doubles. The basics of the results of Furuta \cite{furuta} and Fintushel-Stern \cite{FS-inst} regarding independence in $\Theta^3_{\Z/2}$ will be briefly presented in \autoref{sec::gauge}. However, this result requires a particular geometric structure on 3--manifolds which 2-fold covers over iterated doubles lack. This issue can be sidestepped via cobordisms and so \autoref{sec::cobordisms} describes constructions of definite cobordisms from our 3--manifolds to 3--manifolds with the right geometry. With all the ingredients at hand, \autoref {sec::proof} presents the final proof.

\section{Satellites and their cover}\label{sec::covers}

The classification of knots up to concordance, and in particular the study of sliceness, is a problem that involves both 3-- and 4--manifold topology. Double branched covers, for example, provide a connection between problems in knot theory and other questions in low-dimensional topology. To be more specific, obstructions to sliceness can be found using homology cobordism, a 3--dimensional analogue to concordance. Call two oriented $\Z/2$--homology spheres $\Sigma_0$ and $\Sigma_1$ homology cobordant if there is an oriented smooth 4--manifold $W$ with oriented boundary $-\Sigma_0 \sqcup \Sigma_1$ and such that $H_*(W;R)=H_*(I\times S^3;R)$. In this case, we call $W$ a cobordism from $\Sigma_0$ to $\Sigma_1$, and we call it a negative or positive definite cobordism if its intersection form is respectively negative definite or positive definite. The set of homology cobordism classes of $\Z/2$--homology spheres forms an abelian group denoted by $\Theta_{\Z/2}^3$ and with connected sum as the group operation. To establish the relationship between homology cobordism and concordance, for a knot $K$ in $S^3$, denote 
by $\Sigma(K)$ the 2-fold cover of $S^3$ branched over $K$. Notice that if two knots $K_0$ and $K_1$ are concordant via an annulus $A$, then the double cover of the cylinder $I\times S^3$ branched along $A$ is a $\Z/2$-homology cobordism between the 3--manifolds $\Sigma(K_0)$ and $\Sigma(K_1)$ and so there is a well defined assignment $K\to\Sigma(K)$ from the smooth concordance group $\mathcal{C}$ into $\Theta^3_{\Z/2}$. Moreover, since the separating $S^2$ that appears in a connected sum $K_0\# K_1$ lifts to a separating sphere of the 2-fold cover of the sum, this map is a group homomorphism. Thus, the question of the existence (or lack thereof) of a slicing disk for a knot $K$ translates into the question of the existence of a 4--manifold with the same $\Z/2$-homology groups as the 4--ball and with boundary $\Sigma(K)$. This section details a special and particularly useful decomposition of branched covers of satellites.\\

First, recall that the untwisted satellite knot $P(K)$ with pattern $P$ and companion $K$ is obtained as the image of $P$ under the embedding of an unknotted solid torus $V$ in $S^3$ containing $P$ that knots $V$ as a tubular neighborhood of $K$, using the 0-framing of $K$. For $n$ any integer, an $n$-twisted satellite is obtained using the $n$-framing of $K$ in the identification. Whitehead doubles are an important example of untwisted satellites and are obtained by using the Whitehead link as the pattern of the satellite operation. Similar examples arise by considering Whitehead doubles of other Whitehead doubles. These examples are called iterated Whitehead doubles. The images to the right of \autoref{fig::iterated} show the patterns of the first three iterations of this satellite operation. \\

Now, regarding general covers of $S^3$ branched over satellites, denote by $\Sigma_q(P)$ the $q$-fold branched cover of $S^3$ branched over $P(K)$. If $J$ is a parallel copy of a meridional curve for the solid torus $V$ so that $V$ can be identified with $S^3\setminus N(J)$, then $J$ has $l$ lifts into $\Sigma_q(P)$, where $l=\text{gcd}(q, lk(J,P))$. It follows that $\Sigma_q(P(K))$, the $q$-fold branched cover of $S^3$ branched over $P(K)$, is formed from $\Sigma_q(P)$ by removing neighborhoods of the lifts of $J$ and replacing each with the $q/l$-cyclic cover of the complement of $K$. See \cite{Livingston-Melvin} and \cite{Seifert} for the details. For Whitehead doubles and $q=2$ specifically, we have the following decomposition. The details can be found in \cite{hedden-kirk} or \cite{tesis}.
\begin{equation}\label{r=1}\Sigma\left(D(K)\right)=S^3\setminus N\left(T_{2,4}\right)\underset{\widetilde{\phi}}{\cup} 2 E(K).\end{equation} 
Here $E(K)$ denotes the knot exterior $S^3\setminus N(K)$ and the gluing map $\widetilde{\phi}$ identifies  each copy of the meridian $\mu_K$ of the knot $K$ to the curve $(-2,1)$ in each component of $\partial N\left(T_{2,4}\right)$. See \autoref{fig::lifts} for a visualization of this cover. This description is obtained by identifying the covering space of $S^3\setminus N(D)$ with $S^3\setminus N(T_{2,4})$, where $T_{2,4}$ is the (unoriented) two component torus link determined by $(2,4)$. The following proposition relies on this identification to describe a similar decomposition for the double covers $\Sigma\left(D^r(K)\right)$ for $r>1$.

\begin{figure}
\centering
\def\svgwidth{0.25\textwidth}
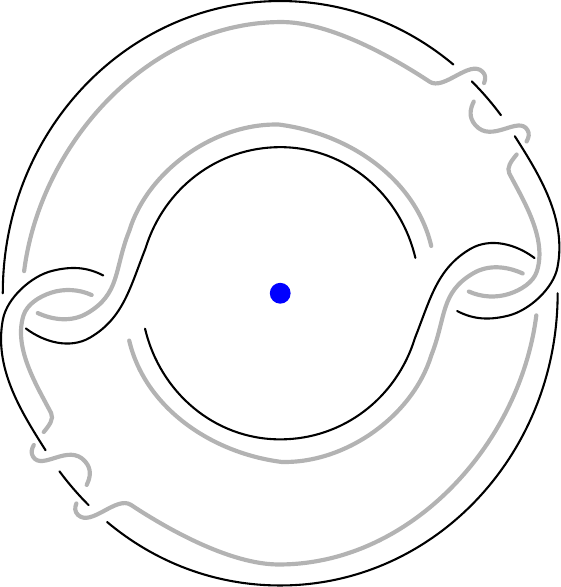$\qquad\to\qquad$
\def\svgwidth{0.25\textwidth}
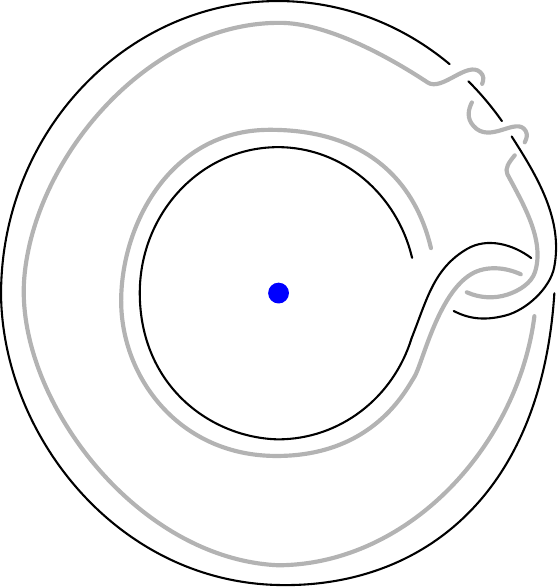
\caption{The cover $\Sigma\left(D^1\right)\to S^3$, the knot $J$ and its longitude, as well as their lifts.}\label{fig::lifts}
\end{figure}

\begin{proposition}\label{covers} Given a knot $K\subseteq S^3$ and an integer $r\geq 1$, let $D^r(K)$ be the $r$-th iterated Whitehead double of $K$. The 2-fold cover $\Sigma\left(D^r(K)\right)$ of $S^3$ branched over $D^r(K)$ has a decomposition $$\Sigma\left(D^r(K)\right)=S^3\setminus N\left( D_{-2}^{r-1}(T_{2,4})\right)\underset{\widetilde{\phi}}{\cup} 2 E(K),$$ where $2E(K)$ denotes two disjoint copies of the knot exterior $S^3\setminus N(K)$, and $D_{-2}^{r-1}(T_{2,4})$  the iterated double of the link $T_{2,4}$ twisted by $-2$. Additionally, the gluing map $\widetilde{\phi}$ identifies each copy of the meridian $\mu_K$ of the knot $K$ with the longitude of one of the components of  of $D_{-2}^{r-1}(T_{2,4})$.
\end{proposition}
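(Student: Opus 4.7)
The proof proceeds by induction on $r$, with the base case $r = 1$ being \autoref{r=1}. By the formula for branched covers of satellites recalled above, $\Sigma(D^r(K)) = (\Sigma_2(D^r) \setminus 2N(\widetilde{J})) \cup 2E(K)$, where $\widetilde{J}$ denotes the link formed by the lifts of $J$ in the cover. Since every iterated Whitehead pattern $D^r$ is an unknot in $S^3$, the cover $\Sigma_2(D^r)$ is $S^3$, so it suffices to show inductively that $\widetilde{J} = D_{-2}^{r-1}(T_{2,4})$ and to verify the claimed gluing on the $E(K)$-pieces.

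For the inductive step, exploit the nested structure $D^r \subset N(D^{r-1}) \subset \cdots \subset N(D) \subset V$ and decompose $V = (V \setminus N(D)) \cup V'$, where $V' := N(D)$ is a smaller solid torus containing $D^r$. Because every iterated Whitehead pattern has winding number zero at each nested level, the monodromy of the 2-fold branched cover is trivial on $H_1(V \setminus N(D); \Z/2)$, so this piece lifts to two disjoint copies in the cover. Viewing $D^r$ as a pattern in the smaller solid torus $V'$, the recursive construction identifies $D^r|_{V'}$ with $D^{r-1}$ in $V'$: the core of $V'$ is $D$ itself, so one outermost Whitehead doubling is absorbed into the ambient solid torus. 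Applying the inductive hypothesis to the cover $\Sigma_2(V'; D^{r-1})$ identifies it, together with the parameterization of its two boundary tori, with $S^3 \setminus N(D_{-2}^{r-2}(T_{2,4}))$.

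It remains to assemble the pieces and verify
$$2(V \setminus N(D)) \cup \left[S^3 \setminus N(D_{-2}^{r-2}(T_{2,4}))\right] \cong S^3 \setminus N(D_{-2}^{r-1}(T_{2,4})),$$
with the gluing produced by the covering map. By the recursive definition $D_{-2}^{r-1}(T_{2,4}) = D_{-2}(D_{-2}^{r-2}(T_{2,4}))$, the right-hand side is obtained from $S^3 \setminus N(D_{-2}^{r-2}(T_{2,4}))$ by attaching two Whitehead pattern complements via $(-2)$-framings on each component. Matching these gluings with the covering-map gluing is the main technical obstacle: it requires tracking the lifts of $\mu_{V'}$ and $\lambda_{V'}$ through the cover, combining the base case identification $\mu_K \leftrightarrow (-2,1)$ with the computation that $\lambda_V$ lifts to $\mu_{T_{2,4}}$ (which follows from the Dehn filling that recovers $\Sigma_2(S^3; D) = S^3$), and using the symmetry of the Whitehead link that exchanges the two boundary tori of its complement. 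Once this framing bookkeeping is completed, the inductive claim follows, and the final gluing of $2E(K)$ along the two copies of $\partial V$ via the $0$-framing yields the identification $\mu_K \leftrightarrow$ the inherited longitude of the corresponding component of $D_{-2}^{r-1}(T_{2,4})$, as stated.
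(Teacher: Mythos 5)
Your overall strategy is viable and genuinely different from the paper's: you induct on $r$ inside the cover branched over $D^r$, splitting off the Whitehead link exterior $V\setminus N(D)$ (which lifts trivially since all the relevant winding numbers vanish) and invoking the boundary-swapping symmetry of the Whitehead link, whereas the paper avoids induction altogether by using the symmetry of $D^r\sqcup J$ to compute the lift of $D^r$ in the double cover of $S^3$ branched over the \emph{unknot} $J$, lifting the single satellite embedding $\psi\colon V\to N(D)$ with pattern $D^{r-1}$ through that very concrete cover. However, your argument has a genuine error at exactly the step you yourself flag as the main technical obstacle, and you defer rather than carry out that step. The recursion you assert, $D^{r-1}_{-2}(T_{2,4})=D_{-2}\bigl(D^{r-2}_{-2}(T_{2,4})\bigr)$, and the corresponding claim that the two Whitehead-pattern complements are attached ``via $(-2)$-framings on each component,'' are incorrect for the link named in the statement: there $D^{r-1}_{-2}(T_{2,4})$ is the image of the \emph{untwisted} iterated pattern $D^{r-1}$ under a single $-2$-twisted embedding $\widetilde{\psi}_i\colon V\to N(A_i)$. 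Because every pattern $D^m$ has winding number zero, a $-2$-twisted embedding sends $0$-framed longitudes of winding-number-zero curves to $0$-framed longitudes, so the twist is absorbed and does not propagate: the correct recursion is $D^{r-1}_{-2}(T_{2,4})=D\bigl(D^{r-2}_{-2}(T_{2,4})\bigr)$ (untwisted outermost doubling), equivalently $D^{r-2}\bigl(D_{-2}(T_{2,4})\bigr)$, with the $-2$ appearing only at the innermost stage, where the lift of $\lambda_J$ to $\partial N(A_i)$ is the $(-2,1)$-curve of \autoref{r=1}. Correspondingly, the covering-space gluing in your inductive step attaches each copy of $V\setminus N(D)$ along the $0$-framed longitude of a component of $D^{r-2}_{-2}(T_{2,4})$ (after applying the symmetry), not along a $-2$-framing; with the framings you state, the induction does not close on the claimed link, and the knots $D_{-2}(D_{-2}(A_i))$ and $D(D_{-2}(A_i))$ are genuinely different.

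Two smaller points. First, the inductive hypothesis must be strengthened: the proposition for $r-1$ concerns the closed manifold $\Sigma(D^{r-1}(K))$, while your step needs the relative statement about $\Sigma_2(V,D^{r-1})$ together with the classes of the lifts of \emph{both} $\mu_V$ and $\lambda_V$ on its two boundary tori; moreover this strengthened statement is not uniform in $r$, since for $r-1=1$ the lift of $\mu_V$ is the $(-2,1)$-curve rather than a $0$-framed longitude --- which is precisely the source of the twist you are mispropagating. Second, the ``framing bookkeeping'' you postpone is not a routine afterthought: it is essentially the entire content of the paper's proof (the computation that $\widetilde{\psi}_i$ carries $\mu_V\mapsto\mu_{A_i}$ and $\lambda_V\mapsto -2\mu_{A_i}+\lambda_{A_i}$, combined with the winding-number-zero framing absorption), so leaving it as a declared obstacle, and resolving it incorrectly where you do commit to an answer, leaves the proof incomplete.
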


\begin{proof}
Denote by $\Sigma(V,D^r)$ the 2-fold cover of the solid torus $V=S^3\setminus N(J)$ branched over the pattern $D^r$. Results from \cite{Livingston-Melvin} and \cite{Seifert} show that the 2-fold cover of $S^3$ branched over $D^r(K)$ is completely determined by $\Sigma(V,D^r)$, the homology class of the two lifts of $\mu_V=\lambda_J$ to $\Sigma(V,D^r)$, and the knot exterior $S^3 \setminus N(K)$. Since the iterated Whitehead link $D^r\sqcup J$ is symmetric, $\Sigma(V,D^r)$ is diffeomorphic to the 2-fold cover of $S^3\setminus N(D^r)$ branched over $J$. The latter space can be obtained from $\Sigma(S^3,J)\cong S^3$ after removing a tubular neighborhood of the lift of $D^r$, and so a description of the lift of $D^r$ and the lift of its longitude will be enough to completely understand $\Sigma\left(D^r(K)\right)$. With that in mind, consider the untwisted satellite map $\psi: V\to N(D)$ with pattern $D^{r-1}$ and realize $D^r\subseteq N(D)$ as $D^r=D^{r-1}(D)=\psi(D)$. Next, related to the identification included as \autoref{r=1}, notice that if $p:\Sigma(S^3,J)\to S^3$ is the cover map, then the restriction of $p$ to $N(T_{2,4})=p^{-1}(N(D))$ is a cyclic covering space $p:N(T_{2,4})\to N(D)$ and so basic covering space theory \cite[Proposition 1.33]{Hatcher} shows that corresponding to the map $\psi:V\to N(D)$, and for each choice of component $A_i$ of $T_{2,4}$, there exists a homeomorphism $\widetilde{\psi}_i: V\to N(A_i)$ satisfying $p\circ \widetilde{\psi}_i=\psi$. Moreover, $\widetilde{\psi}_i$ carries $\mu_{V}$ to $\mu_{A_i}$, and $\lambda_{V}$ to $-2\mu_{A_i}+\lambda_{A_i}$ and therefore the lift of $D^r$ to $N\left(T_{2,4}\right)$ is given by $\widetilde{\psi}_1(D^{r-1})\sqcup \widetilde{\psi}_2(D^{r-1})=D_{-2}^{r-1}(T_{2,4})$, the $-2$-twisted $r-1$ iterated double of the link $T_{2,4}$. Finally, since each $\widetilde{\psi}_i$ is a twisted satellite map in its own right, they identify the longitude of $D^{r-1}$ with a longitude of its image $D_{-2}^{r-1}(A_i)$ thus showing that the lifts of a longitude of $D^r$ to $N(T_{2,4})\subset \Sigma(S^3,J)$ are precisely the longitudes of the components of $D_{-2}^{r-1}(T_{2,4})$.
\end{proof}

\begin{figure}[h]
\centering
\def\svgwidth{0.25\columnwidth} \input{Cover_1.pdf_tex}
\def\svgwidth{0.25\columnwidth} \input{Base_1.pdf_tex}\\

\def\svgwidth{0.25\columnwidth} 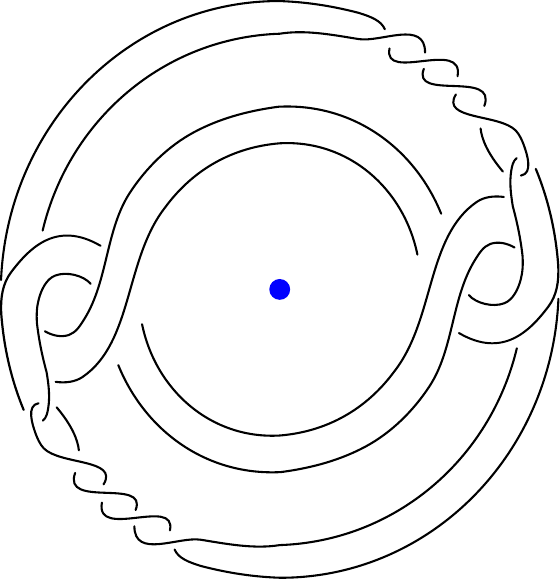
\def\svgwidth{0.25\columnwidth} 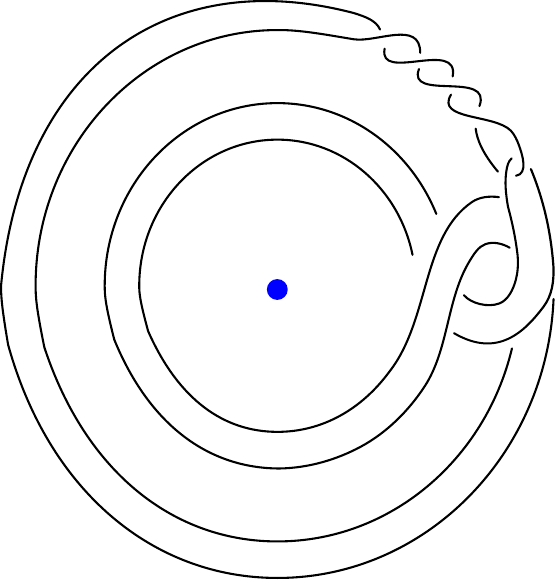\\

\def\svgwidth{0.25\columnwidth} 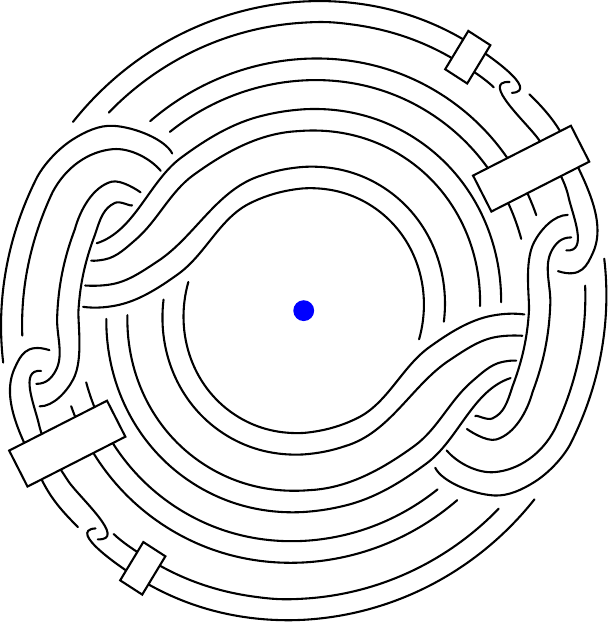
\def\svgwidth{0.25\columnwidth} 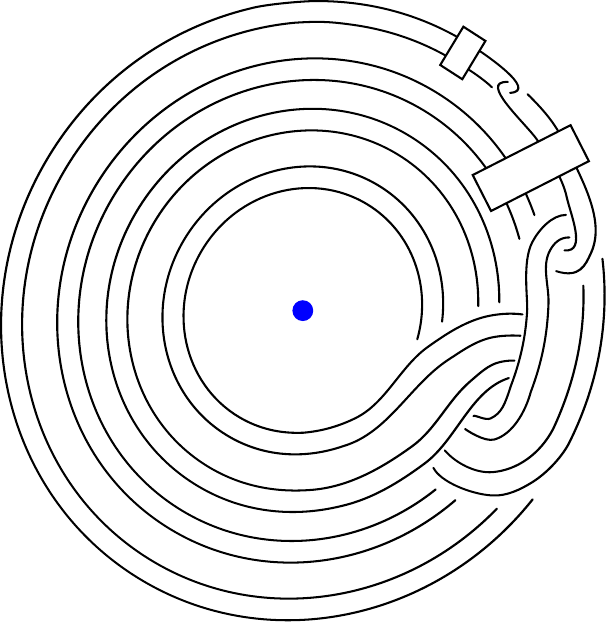
\caption{The cover $\Sigma\left(D^r\right))\to S^3$ for $r=1,2,3$.}\label{fig::iterated}
\end{figure}

\section{The 4--dimensional obstruction}\label{sec::gauge}
Since for every $r\geq 1$ the knot $D^r$ is trivial in $S^3$, Freedman's theorem \cite{freedman, freedman2, freedman-quinn} implies that every iterated untwisted Whitehead double is topologically slice. As a consequence, classical invariants do not detect information about their smooth concordance type and so  smooth techniques like gauge theory are necessary to obtain that information. In this article we will use the internal structure of the moduli space of anti-self dual connections on a Seifert fibration with three exceptional fibers to obtain an obstruction to the sliceness of sums of iterated Whitehead doubles of torus knots. Similar to the way in which Donaldson's theorem applies instantons to describe the specific form the intersection form of a closed and definite 4--manifold takes, the theory of instantons can be applied to manifolds with boundary (via the addition of cylindrical ends) to obstruct the existence of certain 4--manifolds. This theory was first developed by Furuta \cite{furuta} and Fintushel and Stern \cite{FS-inst}, and extended by Hedden and Kirk \cite{hedden-kirk}. For the technical details we refer the reader to their articles. The following theorem is a restatement of their combined results that is free of the technicalities of gauge theory. To establish notation, given $(p,q,s)$ a set of relatively prime and positive integers, let $\Sigma(p,q,s)$ be the Seifert fibered homology sphere with exceptional fibers of orders $p,q,s$.

\begin{theorem}\label{cobound}Let $p_i,q_i$ be relatively prime integers and $k_i$ a positive integer for $i=1,\ldots,N$. If $\left\{\Sigma_i\right\}_{i=1}^N$ is a family of Seifert fibred homology 3-spheres such that $\Sigma_i=\Sigma(p_i,q_i,k_ip_iq_i-1)$ and satisfying
\begin{equation}\label{criterion}
p_iq_i(k_ip_iq_i-1)<p_{i+1}q_{i+1}(k_{i+1}p_{i+1}q_{i+1}-1),\end{equation}
then no linear combination of elements in $\left\{\Sigma_i\right\}_{i=1}^N$ cobounds a smooth 4-manifold $X$ with negative definite intersection form and such that $H_1(X;\Z/2)=0$.
\end{theorem}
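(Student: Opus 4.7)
The plan is to argue by contradiction using instanton gauge theory on $4$-manifolds with cylindrical ends, following the strategy originating in Furuta and Fintushel-Stern. Suppose such an $X$ exists with $\partial X$ a nontrivial linear combination of the $\Sigma_i$, and, after reindexing, take $n_N\neq 0$ with the product $p_Nq_N(k_Np_Nq_N-1)$ maximal among indices carrying a nonzero coefficient. Attach a half-infinite cylinder to each boundary component of $X$ to obtain an open Riemannian $4$-manifold $X^+$ on which finite-energy ASD connections can be studied.

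The key input from Fintushel-Stern is that each $\Sigma_i=\Sigma(p_i,q_i,k_ip_iq_i-1)$ carries a distinguished isolated irreducible flat $SO(3)$ connection $\alpha_i$ whose Chern-Simons invariant equals, modulo $\Z$, a quantity proportional to $-1/\bigl(p_iq_i(k_ip_iq_i-1)\bigr)$. The ordering hypothesis \autoref{criterion} then translates into the strict Chern-Simons ordering $|cs(\alpha_1)|>\cdots>|cs(\alpha_N)|>0$, so $\alpha_N$ realizes the smallest positive Chern-Simons invariant in the family. I would study the moduli space $\mathcal{M}(X^+,\rho)$ of irreducible finite-energy ASD $SO(3)$ connections on $X^+$ whose asymptotic limit $\rho$ equals $\alpha_N$ on one chosen $\Sigma_N$-end and is trivial on every remaining end. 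Using the Atiyah-Patodi-Singer index theorem together with a spectral-flow computation on the cylinder, the virtual dimension of $\mathcal{M}(X^+,\rho)$ can be expressed in terms of $b_2^-(X)$, the second Stiefel-Whitney class of the bundle, and Floer indices of $\alpha_N$; the hypotheses that $X$ is negative definite and that $H_1(X;\Z/2)=0$ constrain the possible $SO(3)$-bundles on $X^+$ and eliminate unwanted reducibles arising from nontrivial $\Z/2$-characters, so the virtual dimension can be arranged to be zero after a generic compactly supported perturbation.

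With the moduli space set up, the argument reduces to a compactness clash. Nonemptyness of $\mathcal{M}(X^+,\rho)$ would be established by a gluing construction: pushing $\alpha_N$ inward along the end yields a near-ASD configuration that can be corrected to a genuine ASD solution by exploiting the topological assumptions to control the obstruction cokernel. On the other hand, the Uhlenbeck-Floer compactification of $\mathcal{M}(X^+,\rho)$ must be analyzed end by end. Bubbling contributes at least $8\pi^2$ of energy, yet the total energy of any element of $\mathcal{M}(X^+,\rho)$ equals $8\pi^2|cs(\alpha_N)|<8\pi^2$, so no bubble can form. Trajectory breaking along a $\Sigma_i$-end would produce a nontrivial ASD connection on $\Sigma_i\times\R$ whose Chern-Simons drop is positive and strictly less than $|cs(\alpha_N)|$; but \autoref{criterion} combined with the Fintushel-Stern catalogue of flat connections on Seifert fibrations rules out any such small drop. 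Hence $\mathcal{M}(X^+,\rho)$ is compact and zero-dimensional, and its signed count is a relative Donaldson-type invariant that, by the negative-definite hypothesis and the vanishing of $H_1(X;\Z/2)$, is forced to vanish, contradicting nonemptyness.

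The hard part is the simultaneous control of the four distinct gauge-theoretic ingredients: producing a nonempty ASD moduli space using the topological assumptions on $X$, verifying that the virtual dimension is zero via a careful APS computation, ruling out bubbling and trajectory breaking via the Chern-Simons ordering, and extracting a numerical contradiction from the topological reducible count. Each step is standard in principle, but the arithmetic of Chern-Simons invariants on Seifert fibrations and the delicate spectral-flow computations are where errors most easily arise.
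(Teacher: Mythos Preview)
Your overall framework---instanton gauge theory on $X$ with cylindrical ends, with the Chern--Simons ordering \eqref{criterion} used to rule out bubbling and energy leaking along the ends---is the right one and matches the paper. But the mechanism you propose for the contradiction is not the one that actually works, and as written it has gaps.

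The paper (following Furuta and Fintushel--Stern) does \emph{not} look at a $0$-dimensional moduli space of irreducibles with prescribed flat limits. Instead, one takes the $SO(3)$ bundle on $X$ determined by the Seifert fibration $\Sigma(p_N,q_N,k_Np_Nq_N-1)\to S^2$ and studies its full ASD moduli space $\mathcal{M}$. The Neumann--Zagier formula gives $\dim\mathcal{M}=1$, not $0$. The singular points of $\mathcal{M}$ are the \emph{reducible} connections, and these function as boundary points of the $1$-manifold. The hypotheses that $X$ is negative definite and $H_1(X;\Z/2)=0$ are used to show that the number of reducibles is \emph{odd} (the count is governed by the torsion in $H_1(X;\Z)$ and the parity of its factors). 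Compactness---which you handle correctly---then yields a compact $1$-manifold with an odd number of boundary points, which is impossible.

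Your version breaks at two points. First, ``nonemptyness by gluing'': pushing $\alpha_N$ in from the end does not by itself produce an ASD connection on $X^+$; gluing requires a seed on the interior, and you have not produced one. Second, ``the signed count is forced to vanish'': you give no reason for this, and in fact nothing in the hypotheses forces a relative invariant of this type to be zero. The actual role of $H_1(X;\Z/2)=0$ is not to kill reducibles but to make their count odd, and the actual role of negative definiteness is to control the index so that $\dim\mathcal{M}=1$; neither feeds into a vanishing-of-signed-count argument in the way you suggest.
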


The result is obtain after examining the moduli space $\mathcal{M}$ of ASD connections on an $SO(3)$ bundle over a 4--manifold $X$ with boundary a sum of elements of $\left\{\Sigma_i\right\}_{i=1}^N$, and determined by fibration $\Sigma(p_N,q_N,k_Np_Nq_N-1)\to S^2$. The (virtual) dimension of $\mathcal{M}$ can be calculated using the Neumann-Zagier formula \cite{nz} and can be shown to be exactly 1 when $p_N$ and $q_N$ are relatively prime positive integers, and $k_N\geq1$. Next, since $\mathcal{M}$ is a 1--dimensional space, its singularities can be regarded as boundary points. In addition, the singularities of $\mathcal{M}$ can be shown to correspond to reducible connections, the number of which can be computed in terms of the order of the torsion subgroup of $H_1(X;\Z)$ and the number of even factors in it. Finally, compactness of $\mathcal{M}$ is guaranteed after requiring $1/4<p_{i+1}q_{i+1}(k_{i+1}p_{i+1}q_{i+1}-1)$ to rule out bubbling, and $p_iq_i(k_ip_iq_i-1)<p_{i+1}q_{i+1}(k_{i+1}p_{i+1}q_{i+1}-1)$ to rule out leaking. If the 4--manifolds $X$ was negative definite and had $H_1(X;\Z/2)=0$, then $\mathcal{M}$ would be a compact 1--dimensional space with an odd number of boundary components, which is a contradiction and so at least one of the hypothesis about the topology of $X$ had to be incorrect.\\

This purely 4--dimensional theorem can be used to obtain an obstruction to sliceness as follows: suppose a certain sum of knots is slice, then the sum of the double covers bounds a smooth 4--manifold $Q$ with $\Z/2$ homology isomorphic to the $\Z/2$ homology of $B^4$. However, if the double covers are cobordant to Seifert fibered spheres via cobordisms satisfying the hypothesis of \autoref{cobound}, then the manifold $X$ obtained as the union of $Q$ and the cobordisms would be a negative definite manifold with $b_1=0$.  The existence of $X$ contradicts \autoref{cobound} and so the original hypothesis of the sliceness of the sums was incorrect. The next section describes some special constructions of cobordisms from double covers of iterated doubles to Seifert fibered spaces.

\section{Cobordisms}\label{sec::cobordisms}
Since the covers $\Sigma(D^r(K))$ are not Seifert fibered spaces, the results explained in \autoref{sec::gauge} cannot be applied directly to their sums. However, the 4--dimensional obstruction allows for $b_2>0$ as long as the 4--manifold is positive definite. In this section we will take advantage of this fact and construct definite cobordisms from the covers to some Seifert fibered spaces. It is worth mentioning that positive definite cobordisms are necessary if we want to consider sums of iterated Whitehead doubles that include negative coefficients. In fact, Donaldson's theorem and its treatment in \cite{cochran-gompf} show that sums of iterated doubles with only positive coefficients are never slice. \\

\begin{lemma}\label{cover-to-splice} Let $r>1$. If $K$ admits a series of positive-to-negative crossing changes that transform it into an unknot, then there exists a negative definite cobordism from $\Sigma\left(D^r(K)\right)$ to $Splice(D^{r-1}_{-2}(U),K)$.\end{lemma}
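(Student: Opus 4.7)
The plan is to exploit the decomposition $\Sigma(D^r(K)) = S^3 \setminus N(D_{-2}^{r-1}(T_{2,4})) \cup_{\widetilde{\phi}} 2E(K)$ from \autoref{covers} and transform one of the two copies of $E(K)$ into a solid torus by realizing the hypothesized unknotting sequence as 2-handle attachments. Fix one copy of the knot exterior, call it $E(K)_1$, in which $K\subset S^3$ is still visible; the other copy $E(K)_2$ and the link exterior $S^3\setminus N(D_{-2}^{r-1}(T_{2,4}))$ will be untouched in what follows.

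Each of the $m$ positive-to-negative crossing changes in the sequence is realized inside $S^3$ by $(-1)$-framed Dehn surgery on an unknotted ``crossing-change circle'' $C_j$ with $\mathrm{lk}(C_j,K)=0$; after the surgery the ambient manifold is still $S^3$, but the crossing in question has flipped sign. Since each $C_j$ lies in $E(K)_1\subset \Sigma(D^r(K))$, I would attach a single $(-1)$-framed 2-handle to $\Sigma(D^r(K))\times [0,1]$ along $C_j\times\{1\}$ for each $j$, after first isotoping the $C_j$ inside $E(K)_1$ to be mutually disjoint. The resulting cobordism $W$ has intersection form $\bigoplus_{j=1}^{m}(-1)$, which is negative definite. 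Its top boundary is obtained from $\Sigma(D^r(K))$ by performing the $m$ crossing changes inside $E(K)_1$, so that the knot $K$ removed from that copy of $S^3$ becomes the unknot and $E(K)_1$ becomes $E(U)$, a solid torus; the gluings to $S^3\setminus N(D_{-2}^{r-1}(T_{2,4}))$ and to $E(K)_2$ are preserved, since the 2-handles are attached entirely in the interior of $E(K)_1$.

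The last step is to identify this top boundary with $Splice(D_{-2}^{r-1}(U),K)$. The point is that filling in the boundary component of $S^3\setminus N(D_{-2}^{r-1}(T_{2,4}))$ to which $E(U)$ is now glued, using the identification $\widetilde{\phi}$ from \autoref{covers} specialized to $K=U$, should plug the corresponding component of $D_{-2}^{r-1}(T_{2,4})$ back in as a tubular neighborhood of $D_{-2}^{r-1}(U)\subset S^3$, recovering $S^3\setminus N(D_{-2}^{r-1}(U))$. Gluing this to the surviving $E(K)_2$ via the same identification is, by definition, $Splice(D_{-2}^{r-1}(U),K)$.

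The main obstacle is the bookkeeping in that final identification: one has to check that the ``longitude of a component of $D_{-2}^{r-1}(T_{2,4})$'' appearing in \autoref{covers} is precisely the curve on $\partial N(A_i)$ which, once $K$ is replaced by the unknot, becomes the meridian disk's boundary of the new solid torus --- equivalently, that the satellite longitudes tracked through the $r-1$ iterations of $D_{-2}$ match the meridional filling that rebuilds $D_{-2}^{r-1}(U)$ rather than producing some nontrivial Dehn filling. A secondary but crucial sign issue is that the ``positive-to-negative'' hypothesis must be interpreted so that each crossing change is realized by $(-1)$-framed, rather than $(+1)$-framed, surgery on $C_j$; this is what guarantees the cobordism is negative definite instead of positive definite.
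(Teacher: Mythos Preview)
Your approach is essentially identical to the paper's: attach $(-1)$-framed 2-handles along unknotting circles in one copy of $E(K)$ to obtain a negative definite cobordism with intersection form $-I_m$, then identify the new boundary using the gluing data of \autoref{covers}. One small correction to the bookkeeping you flag as the main obstacle: in $E(U)\cong D^2\times S^1$ it is $\lambda_K$, not $\mu_K$, that becomes the meridional disk boundary $\partial D^2\times\{\mathrm{pt}\}$; since $\widetilde\phi$ sends $\mu_K\mapsto\lambda_i$ and (as the paper checks) $\lambda_K\mapsto\mu_i$, the resulting filling is indeed the trivial one, recovering $S^3\setminus N(D_{-2}^{r-1}(U))$ and hence the splice exactly as you outline.
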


\begin{proof} 
By hypothesis $K$ admits a series of positive-to-negative crossing changes that transform it into an unknot. That is, there exists a sequence of positive-to-negative crossing changes such that the $i$-th crossing change is obtained by performing $-1$ surgery on $S^3$ along a trivial knot $\gamma_i$ that lies in $E(K)$, encloses the crossing, and has linking number $0$ with the knot $K$. Next, consider the description of the double cover described in \autoref{covers}, and notice that $\gamma_i$ is contained in $E(K)$, and that the gluing map identifies $\mu_K$ with the longitude of one of the components of $D^{r-1}_{-2}(T_{2,4})$. Thus, $\gamma_i$ can be regarded as a subset of $\Sigma=\Sigma(D^r(K))$ with framing precisely given by its framing in $S^3$. Form a 4--manifold $Z$ by attaching 2--handles to $I\times \Sigma$ along the framed circles $\gamma_i$.\\ 

First, notice that the oriented boundary of $Z$ consists of the disjoint union of $-\Sigma$ and the result of surgery on $\Sigma$ along the knots $\gamma_i$ with framing number $-1$. Denote by $Y$ the latter manifold and notice that as a consequence of \autoref{covers}, $Y$ can be seen to split as the union of $ E(K) \underset{\varphi_1}{\cup} \left(S^3\setminus N(D^r_{-2}(T_{2,4}))\right)$ and the result of $-1$--surgery on $E(K)$ along the $\gamma_i$'s. Since the $\gamma_i$'s were chosen to give an unknotting sequence for $K$, $-1$--surgery on $E(K)$ along the $\gamma_i$'s is isomorphic to the unknot complement and therefore isomorphic to a standard solid torus $D^2\times S^1$. Furthermore, the isomorphism preserves meridian-longitudes pairs and thus the Seifert longitude of $K$ gets sent to a meridional curve $\partial D^2\times\{\text{pt.}\}$ of $D^2\times S^1$ and the meridian of $K$ gets sent to the longitudinal curve $\{\text{pt.}\}\times S^1$ of the solid torus $D^2\times S^1$. Then, there is an isomorphism $$Y\cong\left(S^3\setminus N(K)\right)\underset{\varphi_1}{\cup}\left(S^3\setminus N(D^r_{-2}(T_{2,4}))\right) \underset{\varphi'_2}{\cup} D^2\times S^1,$$ where $\varphi'_2$ identifies $\partial D^2\times\{\text{pt.}\}$ with a longitude of one of the components of $D^r_{-2}(T_{2,4})$. Specifically, if $A_1,A_2$ are the components of the link $T_{2,4}$ and $\mu_2,\;\lambda_2$ are respectively, a meridian and longitude of $D^r_{-2}(A_2)$, then the gluing map $\varphi'_2$ satisfies $$\left(\varphi'_2\right)_*(\left[S^1\right])=(\varphi_2)_*(\mu_K)=\lambda_{2}\quad\text{ and }\quad \left(\varphi'_2\right)_*(\left[\partial D^2\right])=(\varphi_2)_*(\lambda_K)=\mu_{2}.$$ Therefore $\varphi'_2$ extends to the interior of $D^2\times S^1$ and we have $$Y\cong\left(S^3\setminus N(K)\right)\underset{\varphi_1}{\cup}\left(S^3\setminus N(D^{r-1}_{-2}(A_1))\right),$$ with $$(\varphi_1)_*(\mu_K)=\lambda_{1}\quad\text{ and }\quad (\varphi_1)_*(\lambda_K)=\mu_{1}.$$
This shows that $Y$ is diffeomorphic to $Splice(K,D^{r-1}_{-2}(U))$.\\

Next, to see that $Z$ is negative definite, notice that since $\Sigma$ is a homology sphere, the second homology group $H_2(Z;\Z)$ admits a basis determined by the 2--handles, and the matrix representation of the intersection form of $Z$ in terms of this basis is given by the linking matrix of the $\gamma_i$'s. It should be clear that if $c$ is the number of crossing changes in the unknotting sequence for $K$, this matrix is $-I_c$, where $I_c$ is the $c\times c$ identity matrix. 
\end{proof}

\begin{lemma}\label{splice-to-surgery} Let $Splice(K_0,K_1)$ denote the splice of two knots $K_0,K_1\subseteq S^3$. There exists a negative definite cobordism from $Splice(K_0,K_1)$ to $S^3_{+1}\left(K_0\right)\#S^3_{+1}\left(K_1\right)$.
\end{lemma}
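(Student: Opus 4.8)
The plan is to exhibit the desired cobordism as a single $2$--handle attachment to $I\times Splice(K_0,K_1)$. Recall that $Splice(K_0,K_1)$ is obtained from the disjoint exteriors $E(K_0)$ and $E(K_1)$ by gluing $\partial E(K_0)$ to $\partial E(K_1)$ so that $\mu_{K_0}\mapsto\lambda_{K_1}$ and $\lambda_{K_0}\mapsto\mu_{K_1}$. Inside the splice there is a distinguished curve: take $\gamma$ to be a meridian $\mu_{K_0}$ of $K_0$, which by the gluing is identified with the longitude $\lambda_{K_1}$ of $K_1$. This $\gamma$ lies on the splicing torus $T$ and bounds a disk in neither side, but it is isotopic into $E(K_0)$ (as $\mu_{K_0}$) and into $E(K_1)$ (as $\lambda_{K_1}$). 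Give $\gamma$ the framing induced by the splicing torus $T$; I claim this is the $-1$--framing relative to the appropriate normal framings on each side, or can be arranged to be, by the same surgery-description computation used in \autoref{cover-to-splice}. Form $Z$ by attaching a single $2$--handle to $I\times Splice(K_0,K_1)$ along $\gamma$ with this framing.

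The main computation is to identify the outgoing boundary $Y$ of $Z$, namely the result of surgery on $Splice(K_0,K_1)$ along $\gamma$ with the chosen framing, and to show $Y\cong S^3_{+1}(K_0)\# S^3_{+1}(K_1)$. Here is the mechanism: surgery along $\gamma$, which sits on the splicing torus $T$, has the effect of cutting the manifold along $T$ and regluing after the surgery. Concretely, one shows that the surgered manifold decomposes as $E(K_0)$ glued along its boundary to a solid torus $V_0$ and, separately, $E(K_1)$ glued to a solid torus $V_1$ — i.e., the surgery ``fills back in'' each side — where the gluing of $V_0$ to $\partial E(K_0)$ sends $\partial D^2$ to $\mu_{K_0}+\lambda_{K_0}$ (yielding $+1$--surgery on $K_0$), and symmetrically $V_1$ to $\partial E(K_1)$ sends $\partial D^2$ to $\mu_{K_1}+\lambda_{K_1}$ (yielding $+1$--surgery on $K_1$). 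The sign of the framing on $\gamma$ must be pinned down carefully so that the slopes produced are $+1$ on each factor; tracking the meridian--longitude bases through the splice identification $\mu_{K_0}\leftrightarrow\lambda_{K_1}$, $\lambda_{K_0}\leftrightarrow\mu_{K_1}$ is exactly the bookkeeping done in the proof of \autoref{cover-to-splice} and is where one must be most careful about orientations. The connected-sum splitting then follows because once each side is a genuine Dehn filling of a knot exterior in $S^3$, the splicing torus $T$ bounds a ball on... more precisely, $T$ becomes compressible and separates $Y$ into the two surgered pieces joined along an $S^2$.

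Finally, to see $Z$ is negative definite: since $Splice(K_0,K_1)$ is a $\Z$--homology sphere (both $K_i$ are knots in $S^3$, and splicing along the standard meridian--longitude swap preserves the homology-sphere property — this is a standard Mayer--Vietoris computation), the group $H_2(Z;\Z)$ is generated by the core of the single $2$--handle together with the Seifert surface of $\gamma$ capped off, hence is rank one, and the intersection form is given by the framing coefficient on $\gamma$, which we arranged to be $-1$. So the form is $(-1)$, negative definite, and $H_1$ of $Z$ vanishes as well.

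The hard part will be step two: correctly identifying the surgered manifold $Y$ and, in particular, getting the signs right so that the result is $S^3_{+1}(K_0)\# S^3_{+1}(K_1)$ rather than some other pair of surgery coefficients or an orientation-reversed version. This requires a disciplined choice of oriented meridian--longitude bases on both splice components and on $\gamma$, and a check that the $2$--handle framing is $-1$ with respect to these choices; the decomposition-along-a-torus argument itself is routine once the slopes are in hand.
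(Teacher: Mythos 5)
The central step of your construction fails, and not merely for sign reasons. Your curve $\gamma$ lies \emph{on} the splicing torus $T$, and for a curve on an embedded torus the relevant surgeries are torus twists: since $\gamma=\mu_{K_0}=\lambda_{K_1}$ bounds a Seifert surface of $K_1$ pushed into $E(K_1)$, and that surface is disjoint from a push-off of $\gamma$ in $T$, the framing induced by $T$ is exactly the null-homologous framing of $\gamma$ in $Splice(K_0,K_1)$ (in particular it is not $-1$). Consequently the only surgeries on $\gamma$ that can produce a $\Z$--homology sphere are the $1/n$--surgeries with respect to the $T$--framing, and each of these is obtained by cutting along $T$ and regluing by the $n$-th power of the Dehn twist along $\gamma$. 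The result is therefore again of the form $E(K_0)\cup_{h} E(K_1)$, glued directly along an incompressible torus; for nontrivial $K_0,K_1$ this manifold is irreducible and toroidal, whereas $S^3_{+1}(K_0)\# S^3_{+1}(K_1)$ is reducible (neither summand is $S^3$ for a nontrivial knot, e.g.\ a positive torus knot, where the summands are Brieskorn spheres). So no choice of framing on your $\gamma$ ``fills back in each side'': surgery on a curve in $T$ never compresses the splicing torus into a separating sphere, and the identification you defer as ``careful bookkeeping of slopes'' cannot be made to work for this curve at all.

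The paper avoids this by attaching the $2$--handle along a different curve, one that is \emph{not} on the splicing torus. It uses the standard surgery presentation of the splice in which the two knots (each given by a link $L_i$ with $\pm1$--framed unknotted auxiliary components) are clasped once, and takes $\gamma$ to be a small unknot encircling the clasp, linking each of the two knot components once. Hoste's formula \cite[Lemma 1.2]{hoste} computes the self-linking of $\gamma$ in the spliced manifold from its diagram framing, which gives the definiteness statement, and blowing down $\gamma$ unclasps the two sublinks, changes the $0$--framings on the knot components to $+1$, and splits the diagram, exhibiting the new boundary as $S^3_{+1}(K_0)\# S^3_{+1}(K_1)$. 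If you want to salvage your write-up, you should replace your choice of $\gamma$ by a curve of this kind (meeting both knot exteriors essentially, i.e.\ linking both cores), rather than trying to fix orientations and slopes for a curve on $T$.
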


\begin{proof}
First, consider surgery descriptions for $K_0$ and $K_1$ consisting of a link $L_i\subset S^3$ ($i=0,1$) whose first component represents $K_i$ and the remaining components are unknotted circles in $S^3$ with zero linking number with $K_i$ and with framing $\pm 1$. Then $Splice(K_0,K_1)$ has surgery diagram given by linking the first component of $L_0$ with the first component of $L_1$ in a way reminiscent of the linking of the components of the positive Hopf link. See \cite[Figure 1.4, pg. 9]{saveliev}.\\
Next, unlink the sub links $L_0$ and $L_1$ to produce the cobordism. Specifically, consider $\gamma$ an unknotted curve in $S^3$ that links the first component of each $L_0$ and $L_1$ exactly once. The formula found in \cite[Lemma 1.2]{hoste} shows that if $\gamma'$ is a curve in the boundary of a tubular neighborhood of $\gamma$ that represents the homology class $(m,1)$, then the linking number of $\gamma$ and $\gamma'$ in $Splice(K_0,K_1)$ is given by $$lk(\gamma,\gamma';Splice(K_0,K_1))=lk(\gamma,\gamma';S^3)-2=m-2.$$ This shows that the 4--manifold obtained by attaching a 2-handle to $I\times Splice(K_0,K_1)$ along $\gamma$ with framing $m$ is negative definite as long as $m\leq 1$. Thus, if we choose $m=-1$, Kirby calculus shows that $-1$ surgery on $Splice(K_0,K_1)$ along $\gamma$ unlinks $L_0$ and $L_1$ and changes the farming of the first component of each $L_0$ and $L_1$ from 0 to 1. It is easy to see that $L_i$ with this new framing is a surgery description of $S^3_{+1}(K_i)$. In addition, since $L_0$ and $L_1$ are now unlinked, there is a separating two-sphere and thus $-1$ surgery on $Splice(K_0,K_1)$ along $\gamma$ is diffeomorphic to $S^3_{+1}\left(K_0\right)\#S^3_{+1}\left(K_1\right)$. \end{proof}

\begin{theorem}\label{cobordisms}

Let $K$ be any knot and $\Sigma\left(D^r(K)\right)$ the 2--fold cover of $S^3$ branched over $D^r(K)$. For $p,q$ a pair of integers that are relatively prime and positive, denote by $T_{p,q}$ the torus knot determined by $p$ and $q$. There exist 4--manifolds $Z_{p,q}$, $P_{p,q}$ and $R_{p,q}$ such that
\begin{enumerate}[label=(\alph*),ref=Theorem \thetheorem (\alph*)]
\item\label{top_cob} $Z_{p,q}$ is a negative definite cobordism from $\Sigma(D^r(T_{p,q}))$ to $-\Sigma(p,q,pq-1)$,
\item\label{pos} $P_{p,q}$ is a positive definite cobordism from $\Sigma(D^r(T_{p,q}))$ to $-\Sigma(p,q,4pq-1)\#-\Sigma(p,q,4pq-1)$, and
\item\label{neg} $R_{p,q}$ is a negative definite manifold with oriented boundary $-\Sigma(D^r(T_{p,q}))$.\end{enumerate}
\end{theorem}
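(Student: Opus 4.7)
The plan is to obtain the three 4--manifolds by stacking the two lemmas of this section with a few standard capping--off moves. For part (a), since $T_{p,q}$ is a positive torus knot it admits an unknotting sequence of positive--to--negative crossing changes, so \autoref{cover-to-splice} yields a negative definite cobordism $Z_1$ from $\Sigma(D^r(T_{p,q}))$ to $Splice(D^{r-1}_{-2}(U),T_{p,q})$, and \autoref{splice-to-surgery} follows it with a negative definite cobordism $Z_2$ to $S^3_{+1}(D^{r-1}_{-2}(U))\#S^3_{+1}(T_{p,q})$. By Moser's surgery classification $S^3_{+1}(T_{p,q})=-\Sigma(p,q,pq-1)$, so only the extraneous summand $S^3_{+1}(D^{r-1}_{-2}(U))$ remains to be killed. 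The trace of $+1$--surgery on $D^{r-1}_{-2}(U)$ in $B^4$ has intersection form $(+1)$ and boundary $S^3_{+1}(D^{r-1}_{-2}(U))$, so reversing orientation produces a negative definite manifold $V$ with $\partial V=-S^3_{+1}(D^{r-1}_{-2}(U))$; a boundary connect sum of $V$ with the cylinder $I\times(-\Sigma(p,q,pq-1))$ gives a negative definite cobordism $Z_3$ that absorbs the unwanted summand. The stacked $Z_{p,q}=Z_1\cup Z_2\cup Z_3$ remains negative definite because all intermediate 3--manifolds are $\Q$--homology spheres, so the intersection forms add by Mayer--Vietoris. Part (c) then follows from (a) by capping off the $-\Sigma(p,q,pq-1)$ end of $Z_{p,q}$ with the standard negative definite plumbing 4--manifold whose boundary is $\Sigma(p,q,pq-1)$.

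For part (b) the strategy must change, because $T_{p,q}$ admits no negative--to--positive unknotting sequence and hence \autoref{cover-to-splice} cannot produce a positive definite cobordism directly. Instead, I would work with the decomposition $\Sigma(D^r(T_{p,q}))=S^3\setminus N(D^{r-1}_{-2}(T_{2,4}))\cup 2E(T_{p,q})$ from \autoref{covers} and attach 2--handles with positive framings along a collection of curves designed to accomplish two tasks at once: to split the middle link $D^{r-1}_{-2}(T_{2,4})$ so that a separating 2--sphere appears in the new boundary (producing a connected--sum decomposition) and to shift the effective Dehn filling slope on each copy of $E(T_{p,q})$ from $\mu_{T_{p,q}}=\lambda_{\text{comp}}$ to the slope corresponding to $+1/4$--surgery on $T_{p,q}$. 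The latter shift can be realized via a slam--dunk construction using $+4$--framed meridians of the torus knot, which contributes positive entries to the linking matrix. Because $S^3_{+1/4}(T_{p,q})=-\Sigma(p,q,4pq-1)$ by Moser's classification, the resulting positive definite cobordism $P_{p,q}$ terminates at $-\Sigma(p,q,4pq-1)\#-\Sigma(p,q,4pq-1)$ as required.

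The main obstacle will be part (b): finding an explicit system of positively framed curves in $\Sigma(D^r(T_{p,q}))$ that realizes the three requirements simultaneously --- separating the two copies of $E(T_{p,q})$, producing $+1/4$--surgery on each, and yielding a positive definite linking matrix. The difficulty is that the most obvious way to separate the two copies is by undoing linking crossings in the middle link $D^{r-1}_{-2}(T_{2,4})$, which for positively linked components forces negative definite contributions; circumventing this requires a careful Kirby--calculus analysis that exploits the $-2$ twisting present in the iterated double to rearrange the handle structure into a positive definite one.
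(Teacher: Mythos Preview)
Your arguments for parts (a) and (c) are essentially correct and close in spirit to the paper's, with two minor differences worth noting. First, \autoref{cover-to-splice} is stated only for $r>1$, so your proof of (a) does not literally cover $r=1$; the paper handles this base case by citing the author's earlier work. Second, to remove the unwanted summand $S^3_{+1}(D^{r-1}_{-2}(U))$ the paper does not cap off with a reversed trace as you do; instead it attaches a single $-1$--framed $2$--handle along a curve enclosing the innermost clasp of $D^{r-1}_{-2}(U)$, unknotting it so that the summand becomes $S^3_{+1}(U)=S^3$ and disappears. Your capping argument is valid as well. For (c) the paper simply cites an earlier result rather than deducing it from (a), but your deduction via the negative definite plumbing bounding $\Sigma(p,q,pq-1)$ is a legitimate alternative.

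The real issue is part (b), where your proposal is a strategy sketch rather than a proof, and the specific mechanism you suggest (a slam--dunk with $+4$--framed meridians together with separate curves to split the link) is not what the paper does and would be awkward to make positive definite, as you yourself anticipate. The paper's construction is much more economical: it attaches exactly two $+1$--framed $2$--handles along unknotted curves $\delta_1,\delta_2$, where $\delta_i$ encircles the \emph{innermost clasp} of the $i$-th component of $D^{r-1}_{-2}(T_{2,4})$ and has linking number $2$ with that component. Blowing down each $\delta_i$ simultaneously unknots its component and introduces a $-4$ twist, so the link $D^{r-1}_{-2}(T_{2,4})$ becomes the two--component unlink $U_2$ with framing $-4$; the resulting $3$--manifold is then visibly $S^3_{1/4}(T_{p,q})\#S^3_{1/4}(T_{p,q})=-\Sigma(p,q,4pq-1)\#-\Sigma(p,q,4pq-1)$. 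The key technical point you were missing is the framing check: because the gluing map $\widetilde\phi$ sends a meridian of each link component to a longitude of $T_{p,q}$, a spanning disk for $\delta_i$ in the link complement can be completed to a Seifert surface for $\delta_i$ inside $\Sigma(D^r(T_{p,q}))$ by adjoining two copies of a Seifert surface for $T_{p,q}$, so the framing of $\delta_i$ in the cover agrees with its framing in $S^3$, namely $+1$. This single move accomplishes the separation, the unknotting, and the slope shift all at once, and the linking matrix is simply $I_2$, so positivity is automatic.
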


\begin{proof} To start, notice that \autoref{neg} follows immediately from \cite[Theorem 5.3(b)]{tesis}. Similarly, the case $r=1$ in \autoref{top_cob} follows from \cite[Theorem 5.3(a)]{tesis} and the fact that $-1$-surgery on $S^3_{1/2}\left(T_{p,q}\right)$ along a longitude of $T_{p,q}$ gives $S^3_{+1}\left(T_{p,q}\right)$ through a negative definite cobordism as in \cite[Lemma 2.11]{cochran-gompf}.\\

Next, to obtain  $Z_{p,q}$ as in \autoref{top_cob}, notice that for $r>1$ \autoref{cover-to-splice} and \autoref{splice-to-surgery} give a negative definite cobordism $W$ from $\Sigma(D^r(T_{p,q}))$ to $S^3_{+1}\left(D^{r-1}_{-2}(U)\right)\#S^3_{+1}\left(T_{p,q}\right)$. Consider a curve $\gamma$ in $S^3$ enclosing the crossings in the smallest clasp of $D^{r-1}_{-2}(U)$ and with linking number zero with the knot $D^{r-1}_{-2}(U)$. Then $-1$ surgery on $S^3$ along $\gamma$ unknots $D^{r-1}_{-2}(U)$. Next, since $lk(\gamma,D^{r-1}_{-2}(U);S^3)=0$, \cite[Lemma 1.2]{hoste} shows that the framing number of $\gamma$ as a knot in $S^3_{+1}\left(D^{r-1}_{-2}(U)\right)$ is also $-1$ and so the manifold $Z_{p,q}$ obtained from attaching a 2--handle to $W$ along $\gamma$ is a negative definite cobordism from $\Sigma(D^r(T_{p,q}))$ to $S^3_{+1}\left(T_{p,q}\right)$. A theorem of Moser \cite{moser} shows that $S^3_{+1}\left(T_{p,q}\right)=-\Sigma(p,q,pq-1)$.\\
 
Lastly, to obtain the cobordism $P_{p,q}$ from \autoref{pos}, consider $+1$-framed unknotted curves $\delta_1,\delta_2$ in $S^3$ that enclose the smallest clasp in each of the components of the link $D^{r-1}_2(T_{2,4})$ with linking number $2$ and framing $+1$. Let $D_0$ be the restriction to $S^3\setminus N(D^{r-1}_2(T_{2,4}))$ of a spanning disk for $\delta_i$. Then, since $\varphi_i$ identifies the meridian of each component of $D^{r-1}_2(T_{2,4})$ with a longitude of $T_{p,q}$, the union of $D_0$ with two copies of a Seifert surface for $T_{p,q}$ in $E(T_{p,q})_i$ is a Seifert surface for $\delta_i$ in $\Sigma(D^r(T_{p,q}))$ and so the framing number for $\delta_i$ in $\Sigma(D^r(T_{p,q}))$ equals its framing number in $S^3$. This shows that the 4--manifold obtained by attaching 2-handles to $I\times \Sigma(D^r(T_{p,q}))$ along $\delta_1,\delta_2$ in $\{1\}\times \Sigma(D^r(T_{p,q}))$ is a positive definite cobordism from $\Sigma(D^r(T_{p,q}))$ to $(S^3\setminus N(U_2))\cup 2(S^3\setminus N(T_{p,q}))$, where $U_2$ is the 2-component trivial link with framing $-4$. This last 3--manifold can be seen to be $S^3_{1/4}(T_{p,q})\#S^3_{1/4}(T_{p,q})$ and the same theorem of Moser mentioned before (\cite{moser}) shows that $S^3_{1/4}(T_{p,q})=-\Sigma(p,q,4pq-1)$.
\end{proof}

Notice that \autoref{top_cob} also exists for any knot $K$ that admits a series of positive-to-negative crossing changes that transform it into a torus knot $T_{p,q}$ with $p,q>0$, and that \autoref{neg} exists for any knot $K$.

\begin{lemma}[Lemma 2.10 from \cite{cochran-gompf}]\label{pos-to-neg} If $J$ admits a series of positive-to-negative crossing changes that transform it into $K$, then for any rational number $q$ there exists a negative definite cobordism from $S^3_q(J)$ to $S^3_q(K)$.
\end{lemma}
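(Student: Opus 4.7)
The plan is to realize each positive-to-negative crossing change by a $-1$ surgery on an unknot in $S^3$ in the standard way, promote these curves to framed attaching circles inside the already-surgered manifold $S^3_q(J)$, and take the trace of those 2-handle attachments.

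More precisely, a single positive-to-negative crossing change on a knot is realized by $-1$ surgery on a small unknotted meridional loop $\gamma$ that encircles the positive crossing; such a $\gamma$ lies in $E(J)$, is unknotted in $S^3$, and satisfies $lk(\gamma,J)=0$. Applying this to the hypothesized unknotting (to $K$) sequence produces curves $\gamma_1,\dots,\gamma_c\subset E(J)$, each unknotted in $S^3$, each with $lk(\gamma_i,J)=0$, and which may be arranged to be pairwise disjoint and pairwise unlinked in $S^3$ (since they are small loops around distinct crossings in a fixed diagram). By hypothesis, simultaneous $-1$ surgery on $\gamma_1,\dots,\gamma_c$ in $S^3$ converts $J$ to $K$ while leaving the ambient manifold diffeomorphic to $S^3$.

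Because each $\gamma_i$ is disjoint from $N(J)$, I would view the $\gamma_i$'s also as curves in $S^3_q(J)$, and form the cobordism $W$ by attaching 2--handles to $I\times S^3_q(J)$ along $\{1\}\times\gamma_i$ with framing $-1$. For the new boundary component, note that $S^3_q(J)$ is built from $E(J)$ by Dehn filling, so surgering the $\gamma_i$'s commutes with this filling; the condition $lk(\gamma_i,J)=0$ guarantees that the Seifert longitude of $J$ is sent to the Seifert longitude of the post-surgery knot $K$, so the framing parameter $q$ is preserved. Therefore the other boundary component is exactly $S^3_q(K)$.

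To verify that $W$ is negative definite, observe that $\Sigma_0:=S^3_q(J)$ need not be a rational homology sphere, but $H_2(W;\Z)$ is freely generated by the cores of the attached 2--handles, and the intersection form is represented by the framing/linking matrix of $\gamma_1,\dots,\gamma_c$ computed inside $\Sigma_0$. The condition $lk(\gamma_i,J)=0$ lets one push a Seifert surface for $\gamma_i$ (in $S^3$, disjoint from $J$) into $\Sigma_0$, from which one reads off that both the self-framings and the mutual linking numbers of the $\gamma_i$'s are unchanged from their values in $S^3$; this is exactly the content of the linking-number formula from \cite[Lemma 1.2]{hoste} used earlier in the paper. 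Consequently the matrix is $-I_c$, which is negative definite. The main technical point to pin down carefully is this invariance of framings and linking numbers under passage from $S^3$ to $S^3_q(J)$, and the compatibility of the surgeries on the $\gamma_i$'s with the $q$-surgery on $J$; everything else is a routine handle attachment argument.
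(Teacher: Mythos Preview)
Your argument is correct and produces the same cobordism as the paper, but the paper assembles it in the reverse order: it first attaches the $-1$--framed 2--handles along $L=\gamma_1\cup\cdots\cup\gamma_c$ to $I\times S^3$ (so that negative definiteness is immediate from the linking matrix $-I_c$ in $S^3$), observes that the product annulus $A=I\times J$ sits inside the resulting $W$ with trivial normal bundle and restricts to $J$ and $K$ on the two ends, and only then performs $q$--surgery along $A$ (excise $A\times D^2$, reglue $I\times S^1\times D^2$) to obtain $W_q(A)$. Negative definiteness of $W_q(A)$ is then read off from an isomorphism $H_2(W_q(A))\cong H_2(W\setminus A)\cong H_2(W)$, so no computation of framings inside $S^3_q(J)$ is ever needed. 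Your route---attaching the handles directly to $I\times S^3_q(J)$ and invoking Hoste's linking formula to show the framing/linking data are unchanged---is precisely how the paper handles the analogous steps in \autoref{splice-to-surgery} and \autoref{cobordisms}, so it fits naturally here as well. The paper's annulus-surgery presentation has the mild advantage of treating all $q$ uniformly without revisiting linking numbers in the surgered manifold, while yours makes the intersection matrix explicit from the outset; since the handles are attached away from $N(J)$, the two constructions are diffeomorphic and the difference is purely organizational.
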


\begin{proof} 

Since $J$ admits a series of positive-to-negative crossing changes that transform it into $K$, there exists a link $L\subseteq S^3\setminus N(J)$ with linking number 0 with $J$ and such that each component of $L$ encloses one of the crossings of $J$ that will be changed. Also, each component of $L$ has framing $-1$. Then, the 4-manifold $W$ obtained by attaching 2-handles to $I\times S^3$ along the framed link $L$ is negative definite, and the annulus $A$ equal to the inclusion $I\times J\to I\times S^3$ regarded as a submanifold of $W$ restricts to $J$ and $K$ at each end of $W$ and has trivial normal bundle in $W$. Denote by $W_q(A)$ the manifold obtained by removing $A \times D^2$ from $W$ and replacing it with $I \times S^1 \times D^2$ using a homeomorphism which is a product on the $I$ factor and restricts on $\partial I$ to the $q$ surgeries on $\partial W$. Then $H_2(W_q(A))\cong H_2(W\setminus A) \cong H_2(W)$ and hence both 4--manifolds have equivalent intersection forms. Thus $W_q(A)$ is a negative definite cobordism from $S^3_q(J)$ to $S^3_q(K)$.\\
\end{proof}

\section{Independence}\label{sec::proof}
This section is the culmination of all the work. Here we mix all the ingredients that we developed in the previous sections to finally obtain the main result, as a corollary to the following 3--dimensional theorem.

\begin{theorem} Let $\left\{\left(p_i,q_i\right)\right\}_i$ be a sequence of relatively prime positive integers and $r_i$ a positive integer for every $i$. If $$p_{i}q_{i}\left(4p_{i}q_{i}-1\right)<p_{i+1}q_{i+1}\left(p_{i+1}q_{i+1}-1\right),$$ then the family $\mathscr{F}=\left\{\Sigma\left(D^{r_i}\left(T_{p_i,q_i}\right)\right)\right\}_{i=1}^\infty$ is independent in $\Theta^3_{\Z/2}$.
\end{theorem}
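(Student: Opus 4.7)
The plan is to argue by contradiction, combining Theorem \ref{cobound} with the definite cobordisms produced in Theorem \ref{cobordisms}. Suppose the family $\mathscr{F}$ is dependent in $\Theta^3_{\Z/2}$. Then there exist integers $n_1,\dots,n_N$, not all zero and with $n_N\neq 0$, such that the connect sum $\#_{i=1}^N n_i\Sigma_i$ bounds a smooth 4--manifold $Q$ with $H_\ast(Q;\Z/2)\cong H_\ast(B^4;\Z/2)$; here $\Sigma_i=\Sigma(D^{r_i}(T_{p_i,q_i}))$ and $n_i\Sigma_i$ denotes $|n_i|$ copies of $\Sigma_i$ oriented by the sign of $n_i$.

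Next I would build a 4--manifold $X$ by attaching definite cobordisms to the boundary summands of $Q$. To each positively oriented copy of $\Sigma_i$ (arising when $n_i>0$) attach the negative definite cobordism $Z_{p_i,q_i}$ of \autoref{top_cob}, replacing that summand of $\partial Q$ by $-\Sigma(p_i,q_i,p_iq_i-1)$. To each negatively oriented copy of $\Sigma_i$ (arising when $n_i<0$) attach $\overline{P_{p_i,q_i}}$, the orientation reversal of the positive definite cobordism of \autoref{pos}; this reversed piece is negative definite and its other boundary is $\Sigma(p_i,q_i,4p_iq_i-1)\#\Sigma(p_i,q_i,4p_iq_i-1)$ in the standard orientation. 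The resulting $X$ then has boundary a nontrivial integer combination, in $\Theta^3_{\Z/2}$, of Seifert fibered homology spheres $\Sigma(p_i,q_i,k_ip_iq_i-1)$, where $k_i\in\{1,4\}$ is determined by the sign of $n_i$; the coefficient at index $N$ is nonzero, and the monotonicity condition \eqref{criterion} follows at once from the hypothesis: for any $k_i,k_{i+1}\in\{1,4\}$, $p_iq_i(k_ip_iq_i-1)\leq p_iq_i(4p_iq_i-1)<p_{i+1}q_{i+1}(p_{i+1}q_{i+1}-1)\leq p_{i+1}q_{i+1}(k_{i+1}p_{i+1}q_{i+1}-1)$.

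It then remains to verify that $X$ is negative definite and $H_1(X;\Z/2)=0$. Since $H_2(Q;\Z/2)=0$, universal coefficients force $H_2(Q;\Z)$ to be finitely generated and 2-divisible, hence pure odd torsion, so the rational intersection form of $Q$ vanishes. Each $\Sigma_i$ is an integral homology sphere (as a double branched cover of a knot with trivial Alexander polynomial), so a Mayer--Vietoris computation identifies the rational intersection form of $X$ with the orthogonal sum of the intersection forms of the attached cobordisms, each of which is negative definite by construction. The vanishing of $H_1(X;\Z/2)$ follows from a parallel Mayer--Vietoris argument, using that each cobordism is built from $I\times\Sigma_i$ by 2--handle attachment and that $H_1(\Sigma_i;\Z/2)=H_1(Q;\Z/2)=0$. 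Theorem \ref{cobound} then rules out such an $X$, producing the required contradiction.

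The main obstacle I anticipate is accommodating \emph{both} positive and negative coefficients $n_i$ while preserving negative definiteness of $X$; the key trick is to orientation--reverse $P_{p_i,q_i}$ when $n_i<0$, at the cost of doubling the resulting Seifert--sphere summand in the boundary. A secondary subtlety is that $Q$ is only a $\Z/2$--homology ball, so its integer intersection form need not vanish, but the 2--divisibility argument above shows its rational intersection form does, which is precisely what the negative-definiteness check requires.
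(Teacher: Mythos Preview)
Your argument is correct and follows the paper's strategy: assume a relation, take the putative $\Z/2$--homology ball $Q$, glue on the negative definite pieces from \autoref{cobordisms} to replace each boundary summand by Seifert fibred spheres satisfying \eqref{criterion}, and invoke \autoref{cobound}. The one substantive difference is that the paper, after assuming without loss of generality that $n_N>0$, applies $Z_{p_N,q_N}$ to a \emph{single} copy of $\Sigma_N$ and caps off all the remaining positively oriented summands with the negative definite fillings $R_{p_i,q_i}$ of \autoref{neg}; your choice to apply $Z_{p_i,q_i}$ to every positive copy works just as well, since the hypothesis $p_iq_i(4p_iq_i-1)<p_{i+1}q_{i+1}(p_{i+1}q_{i+1}-1)$ still forces the needed monotonicity regardless of which $k_i\in\{1,4\}$ appears, exactly as you check. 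Two cosmetic points: you should attach $3$--handles along the separating spheres in $\partial Q$ to turn the connect sum into an honest disjoint union before gluing cobordisms componentwise (the paper does this explicitly), and your detour through $2$--divisibility of $H_2(Q;\Z)$ can be shortened to the observation that $b_2(Q)=0$, so the intersection form of $Q$ has rank zero.
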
 

\begin{proof} Denote by $[Y]$ the homology cobordism class of the $\Z/2$--homology sphere $Y$ and suppose by contradiction that there exist integral coefficients $c_1,\ldots,c_N\in\Z$ such that $$\sum_{i=1}^Nc_i\left[\Sigma\left(D^{r_i}\left(K_i\right)\right)\right]=0$$ in $\Theta^3_{\Z/2}$. The supposition implies the existence of an oriented 4-manifold $Q$ with the $\Z/2$ homology of a punctured $4$--ball and with boundary
$$\partial Q=\overset{N}{\underset{i=1}{\#}}\left(\overset{c_i}{\underset{j=1}{\#}}\Sigma\left(D^{r_i}\left(K_i\right)\right)\right).$$
Attaching 3--handles to $Q$ we can further assume that $$\partial Q=\bigsqcup_{i=1}^Nc_i\Sigma\left(D^{r_i}\left(K_i\right)\right).$$
Here we use $cY$ to denote the disjoint union of $c$ copies of $Y$ if $c>0$, and $-c$ copies of $-Y$ if $c<0$. In addition, and without loss of generality, further assume that $c_N\geq 1$. Augment $Q$ using the cobordisms constructed in \autoref{cobordisms}, namely, let $$X=Q\cup\bigg(Z_{p_N,q_N}\bigg)\cup\left(\bigsqcup_{c_i>0} R_{p_i,q_i}\right)\cup\left(\bigsqcup_{c_i<0} -P_{p_i,q_i}\right).$$
Thus, $X$ is a negative definite 4--manifold with oriented boundary $$\partial X=-\Sigma(p_N,q_N,p_Nq_N-1)\sqcup \left(\bigsqcup_{c_i<0} \Sigma(p_i,q_i,4p_iq_i-1)\right).$$
Additionally, since the first $\Z/2$--homology groups of $Z_{p_N,q_N}$, $-P_{p_i,q_i}$, $R_{p_i,q_i}$, and $Q$ are trivial, the Mayer-Vietoris theorem shows that $H_1(X,\Z/2)=0$. This would imply that the Seifert fibered spaces $-\Sigma(p_N,q_N,p_Nq_N-1)\sqcup \left(\bigsqcup_{c_i<0} \Sigma(p_i,q_i,4p_iq_i-1)\right)$ cobound a smooth 4--manifold that has negative definite intersection form and that satisfies $H_1(X,\Z/2)=0$, contradicting \autoref{cobound}. Therefore, $Q$ cannot exist and so the 3-manifolds $\Sigma\left(D^{r_i}\left(K_{i}\right)\right)$ are independent in the $\Z/2$ homology cobordism group.
\end{proof}

To summarize, to show that a sum $\#_{i=1}^Nc_iY_i$ of $\Z/2$ of homology spheres does not bound a putative $\Z/2$ homology ball $Q$, attach negative definite cobordisms to either cap off $Q$, or to simplify its boundary. Then use gauge theoretical techniques to rule out the existence of the 4--manifold obtained, and thus of the putative ball itself. Ideally, one would obtain a closed 4--manifold and then apply Donaldson's theorem since that would give the strongest possible result. In the case under consideration, sums with only positive coefficients pose no problems since double covers of Whitehead doubles bound negative definite manifolds and so can be capped-off. However, sums that involve positive coefficients pose a problem. Indeed, if we use the positive cobordism from \autoref{sec::cobordisms}, we end up with Seifert fibered spheres that are known not to bound a negative definite 4--manifold. Similalry, if we use the positive definite cobordism $P$ from $-\Sigma(D^r(T_{p,q}))$ to $2S^3_{1/4}(D^{r-1}(T_{p,q})$ described in \cite{tesis}, we end up with 3--manifolds that Hedden \cite{hedden} shows do not bound a negative definite smooth 4--manifold since $\tau(T_{p,q})>0$. Also, the cobordisms constructed in \autoref{sec::cobordisms} do not keep track of the index of the iteration and so our technique cannot be used to prove independence of, for example, the family $\{D^r(T_{2,3})\}_{r\geq 1}$. Regardless, the following corollary provides a condition under which a family of iterated doubles of positive torus knots is independent.

\begin{cor}Let $\left\{\left(p_i,q_i\right)\right\}_i$ be a sequence of relatively prime positive integers, and $r_i$ a positive integer $(i=1,2,\ldots)$. If $p_{i}q_{i}\left(4p_{i}q_{i}-1\right)<p_{i+1}q_{i+1}\left(p_{i+1}q_{i+1}-1\right),$ then the family $\left\{D^{r_i}\left(T_{p_i,q_i}\right)\right\}_{i=1}^\infty$ is independent in $\mathcal{C}$.
\end{cor}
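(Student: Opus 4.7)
The plan is to derive the corollary as an immediate consequence of the preceding $3$--dimensional theorem by invoking the double-branched-cover homomorphism from the concordance group to the $\Z/2$--homology cobordism group. Specifically, recall from the discussion at the beginning of \autoref{sec::covers} that the assignment $K\mapsto \Sigma(K)$ descends to a well-defined group homomorphism
\[
\Phi\colon \mathcal{C}\longrightarrow \Theta^3_{\Z/2},
\]
since a concordance annulus between knots $K_0$ and $K_1$ in $I\times S^3$ lifts (under the double cover of $I\times S^3$ branched along the annulus) to a $\Z/2$-homology cobordism between $\Sigma(K_0)$ and $\Sigma(K_1)$, and the separating sphere in a connected sum lifts to a separating sphere in the double cover of the sum.

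With $\Phi$ in hand, the argument is a contrapositive. Suppose for contradiction that the family $\{D^{r_i}(T_{p_i,q_i})\}_{i=1}^{\infty}$ is \emph{not} independent in $\mathcal{C}$. Then there exist an integer $N$ and integers $c_1,\ldots,c_N\in\Z$, not all zero, such that
\[
\sum_{i=1}^{N} c_i \bigl[D^{r_i}(T_{p_i,q_i})\bigr] = 0 \quad \text{in } \mathcal{C}.
\]
Applying the homomorphism $\Phi$ to this relation yields
\[
\sum_{i=1}^{N} c_i \bigl[\Sigma\bigl(D^{r_i}(T_{p_i,q_i})\bigr)\bigr] = 0 \quad \text{in } \Theta^3_{\Z/2},
\]
which is a non-trivial linear relation among the elements of $\mathscr{F}=\{\Sigma(D^{r_i}(T_{p_i,q_i}))\}_{i=1}^{\infty}$. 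This directly contradicts the preceding theorem, which asserts the independence of $\mathscr{F}$ in $\Theta^3_{\Z/2}$ under precisely the growth hypothesis $p_iq_i(4p_iq_i-1) < p_{i+1}q_{i+1}(p_{i+1}q_{i+1}-1)$ assumed in the corollary.

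Since no non-trivial relation among the knots can exist, the family $\{D^{r_i}(T_{p_i,q_i})\}_{i=1}^\infty$ is independent in $\mathcal{C}$. There is essentially no technical obstacle in this step, as every analytic and topological difficulty has already been absorbed into \autoref{cobordisms} and the underlying gauge-theoretic input \autoref{cobound}; the corollary is the formal translation of a $3$--dimensional independence statement into a $4$--dimensional concordance statement via the standard functoriality of the branched double cover construction.
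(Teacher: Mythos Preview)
Your proposal is correct and matches the paper's approach exactly: the paper states the corollary without a separate proof, relying on the homomorphism $K\mapsto\Sigma(K)$ from $\mathcal{C}$ to $\Theta^3_{\Z/2}$ introduced at the start of \autoref{sec::covers}, so that independence in $\Theta^3_{\Z/2}$ (the preceding theorem) immediately forces independence in $\mathcal{C}$.
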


\bibliographystyle{abbrvnat}
\bibliography{References}
\end{document}